\def\S{\mathcal{S}}
\def\Re{\mathbb{R}}
\def\I{\mathcal{I}}
\def\A{{\mathcal{A}}}
\def\B{\mathcal{B}}
\def\CC{\mathbb{C}}
\def\D{\mathcal{D}}
\def\U{\mathcal{U}}
\def\V{\mathcal{V}}
\def\S{\mathcal{S}}
\def\0{{\bf 0}}
\begin{document}
 \title{ST-SVD Factorization and s-Diagonal Tensors\thanks{To appear in: Communications in Mathematical Sciences.}}


          \author{Chen Ling\thanks{Department of Mathematics, Hangzhou Dianzi University, Hangzhou, 310018, China, (macling@hdu.edu.cn). This author's work was supported by Natural Science Foundation of China (No. 11971138) and Natural Science Foundation of Zhejiang Province (No. LY19A010019, LD19A010002).}
          \and Jinjie Liu\thanks{School of Mathematical Sciences, Shanghai Jiao Tong University, Shanghai, 200240, China, (jinjie.liu@sjtu.edu.cn). This author's work was supported by Natural Science Foundation of China (No. 11801479, No. 12001366).}
          \and Chen Ouyang\thanks{School of Computer Science and Technology, Dongguan University of Technology, Dongguan, 523000, China, (oych26@163.com). This author's work was supported by Natural Science Foundation of China (No.11971106) and Guangdong Universities’ Special Projects in Key Fields of Natural Science
(No. 2019KZDZX1005).}
          \and Liqun Qi\thanks{Department of Mathematics, Hangzhou Dianzi University, Hangzhou, 310018, China; Department of Applied
    Mathematics, The Hong Kong Polytechnic University, Hung Hom,
    Kowloon, Hong Kong, China, (liqun.qi@polyu.edu.hk). Corresponding Author.}}

         \pagestyle{myheadings} \markboth{ST-SVD Factorization}{Chen Ling, Jinjie Liu, Chen Ouyang and Liqun Qi} \maketitle

          \begin{abstract}
               A third order real tensor is mapped to a special f-diagonal tensor by going through Discrete Fourier Transform (DFT), standard matrix SVD and inverse DFT.  We call such an f-diagonal tensor an s-diagonal tensor.   An f-diagonal tensor is an s-diagonal tensor if and only if it is mapped to itself in the above process.  The third order tensor space is partitioned to orthogonal equivalence classes.   Each orthogonal equivalence class has a unique s-diagonal tensor.  Two s-diagonal tensors are equal if they are orthogonally equivalent.   Third order tensors in an orthogonal equivalence class have the same tensor tubal rank and T-singular values. Four meaningful necessary conditions for s-diagonal tensors are presented.   Then we present a set of sufficient and necessary conditions for s-diagonal tensors.  Such conditions involve a special complex number.   In the cases that the dimension of the third mode of the considered tensor is $2, 3$ and $4$, we present direct sufficient and  necessary conditions which do not involve such a complex number.
          \end{abstract}
\begin{keywords}   T-SVD factorization; s-diagonal tensor; f-diagonal tensor; necessary conditions; sufficient and necessary conditions
\end{keywords}

 \begin{AMS} 15A69; 15A18
\end{AMS}
          \section{Introduction}\label{intro}
Matrix SVD factorization is a fundamental tool in numerical linear algebra \cite{GV13}.  For a real $m \times n$ matrix $A$, it is decomposed to the product of an $m \times m$ orthogonal matrix $U$, an $m \times n$ diagonal matrix $S$, and an $n \times n$ orthogonal matrix $V$:
\begin{equation} \label{e1.1}
A = USV^\top.
\end{equation}
Here, $S$ needs to be a nonnegative diagonal matrix.   Then the diagonal entries of $S$ are singular values of $A$.   If the diagonal entries of $S$ are in the standard nonincreasing order, then we may regard (\ref{e1.1}) as a standard SVD of $A$ \cite{GV13}.

The matrix SVD factorization (\ref{e1.1}) was extended to third order tensors as T-SVD factorization by Kilmer and Martin \cite{KM11, KMP08}.   The T-SVD factorization has been found wide applications in engineering and tensor computation \cite{CXZ20, KBHH13, LHPQ21, LYQX20, Lu18, MQW20, MQW21, QY21, SHKM14, SNZ21, XCGZ21, XCGZ21a, YHHH16, ZSKA18, ZEAHK14, ZA17, ZHW21, ZLLZ18}.    In T-SVD factorization, an $m \times n \times p$ third order real tensor $\A$ is decomposed to the tensor product of an $m \times m \times p$ orthogonal tensor $\U$, an $m \times n \times p$ real f-diagonal tensor $\S$, and an $n \times n \times p$ orthogonal tensor $\V$:
\begin{equation} \label{e1.2}
\A = \U * \S * \V^\top,
\end{equation}
where $*$ is the tensor-tensor product, or t-product for short.  An $m \times n \times p$ tensor is called an f-diagonal tensor\cite{KM11} if all of its frontal slices $A^{(k)}$ are diagonal matrices, for $k = 1, 2, \ldots, p$.  In the next section, we will review the knowledge about t-product, orthogonal tensors, f-diagonal tensors, etc.

The T-SVD factorization (\ref{e1.2}) is used in low rank tensor approximation, where the rank is the tensor tubal rank \cite{ZLLZ18}.  To define the tensor tubal rank, the third order tensor $\A$ must go through Discrete Fourier transform (DFT), standard matrix SVD and inverse DFT.   The f-diagonal tensor $\S$, thus obtained, may be denoted as $\S = G(\A)$, where $G$ is the Kilmer-Martin mapping to represent the above processing.  In Section 3, we will state the Kilmer-Martin mapping in details.

The f-diagonal tensor $\S$ in this way obtained is a special f-diagonal tensor.  In order to  distinguish them from general f-diagonal tensors,  we call such an f-diagonal tensor an s-diagonal
tensor.   In Section 4, we formally define s-diagonal tensors:  an f-diagonal tensor $\S$ is an s-diagonal tensor if there is a tensor $\A$ such that $\S = G(\A)$.  Two $m \times n \times p$ tensors $\A$ and $\B$ are called orthogonally equivalent \cite{QLLO21} if there are an $m \times m \times p$ orthogonal tensor $\U$ and an $n \times n \times p$ orthogonal tensor $\V$ such that
$$\A = \U * \B * \V^\top.$$
It was proved in \cite{QLLO21} that if $\A$ and $\B$ are orthogonally equivalent, then $G(\A) = G(\B)$.  Based upon this result, in Section 4, we derive a checkable result for s-diagonal tensors: an f-diagonal tensor $\S$ is an s-diagonal tensor if and only if $G(\S) = \S$.   Then we have an orthogonal partition theorem for $T(m, n, p)$, the set of all $m \times n \times p$ real tensors. The third order tensor space is partitioned into orthogonal equivalence classes.   Each orthogonal equivalence class has a unique s-diagonal tensor. Any pair of  nonzero orthogonal equivalence classes have the same cardinality in the sense that a one-to-one relation can be established between these two classes.  Two s-diagonal tensors are equal if they are orthogonally equivalent.   Third order tensors in an orthogonal equivalence class have the same tensor tubal rank and T-singular values, which can be calculated from the entries of the s-diagonal tensor in that class.  The T-singular values of that class are linked with some extremal values of third order tensors in that class. The contributions of this paper are to reveal that the tensor obtained by Kilmer-Martin mapping is a special f-diagonal tensor, i.e, an s-diagonal tensor, and further identify the essential characteristics and properties of s-diagonal tensors.

In Section 5, we present four necessary conditions for an f-diagonal tensor to be an s-diagonal tensor.  These four necessary conditions are the tubal 2-norm decay property, the first frontal slice decay property, the third mode symmetry property and the tubal leading entry maximum property.   We state the meanings of these four necessary conditions in that section. In this way, sometimes we may identify some f-diagonal tensors are not s-diagonal tensors easily.   Some necessary conditions are especially useful. For example, the tubal 2-norm decay property was used in \cite{QY21} to define T-singular values and tail energy for error estimate of tensor sketching algorithms.

Then, in Section 6, we present a set of sufficient and necessary conditions for an f-diagonal tensor to be an s-diagonal tensor.  Such conditions involve a special complex number, hence are not so direct.   From these conditions, we conclude that the set of the s-diagonal tensors is a closed convex cone.

In Section 7, for $p = 2, 3, 4$, we present direct sufficient and necessary conditions for an f-diagonal tensor to be an s-diagonal tensor.   No complex numbers are involved in these conditions.  An s-diagonal tensor may not be nonnegative.  This can be seen from these sufficient and necessary conditions.

Some final remarks are made in Section 8.

\section{Preliminaries}

In this paper, $m, n$ and $p$ are positive integers, and $p \ge 2$.    Denote real matrices by capital Roman letters $A,B,\ldots$, complex matrices by capital Greek letters $\Delta, \Sigma, \ldots$, and tensors by Euler script letters $\mathcal{ A},\mathcal{B},\ldots$.    We use $\Re$ to denote the real number field, $\CC$ to denote the complex number field.
The set of all $m \times n \times p$ real tensors is denoted as $T(m, n, p)$.  Then $T(m, n, p)$ is a linear space.
For a third order tensor $\A \in T(m,n,p)$, its $(i,j,k)$-th element is represented by $\A(i, j, k)$.   The frontal slice $\mathcal{A}(:,:,k)$ is denoted by $A^{(k)}$.

A tensor $\A \in T(m, n, p)$ is called an f-diagonal tensor if all of its frontal slices $A^{(k)}$ are diagonal matrices for $k = 1, 2, \ldots, p$.   The set of all $m \times n \times p$ real f-diagonal tensors is denoted as $F(m, n, p)$, which is also a linear space.

For a third order tensor $\A \in T(m, n, p)$,  define
$${\rm bcirc}(\A):= \left(
\begin{array}{ccccc}
 A^{(1)}\ & A^{(p)} & A^{(p-1)} & \cdots & A^{(2)}\ \\
  A^{(2)} & A^{(1)} & A^{(p)} & \cdots & A^{(3)}\\
   \vdots\ \ \ & \ \vdots & \vdots\ \  & \ddots & \vdots\ \ \ \\
	A^{(p)} & A^{(p-1)} & A^{(p-2)} & \cdots & A^{(1)}
\end{array}\right),$$
 and bcirc$^{-1}($bcirc$(\A)):= \A$.    The block matrix bcirc$(\A)$ is an $mp \times np$ matrix.   If $m \not = n$, then we cannot say that bcirc$(\A)$ is symmetric.   However, we may say that bcirc$(\A)$ is symmetric in the sense of block if $A^{(k)} = A^{(p-k+2)}$ for $k = 2, 3, \ldots, 1 + \lfloor {p-1 \over 2}\rfloor$, where$\lfloor {p-1 \over 2}\rfloor$ represents the largest integer less than or equal to $ {p-1} \over 2$.

For $\A \in T(m, n, p)$, its transpose
 is defined as
 $$\A^\top = {\rm bcirc}^{-1}[({\rm bcirc}(\A))^\top].$$
 The identity tensor $\I_{nnp}$ is defined as
 $$\I_{nnp} = {\rm bcirc}^{-1}(I_{np}),$$
 where $I_{np}$ is the identity matrix in $\Re^{np \times np}$.

 For $\A \in T(m, n, p)$, define
 $${\rm unfold}(\A) := \left(\begin{array}{c} A^{(1)}\\ A^{(2)}\\  \vdots \\ A^{(p)}\end{array}\right) \in \Re^{mp \times n},$$
and fold$($unfold$(\A)) := \A$.   For $\A \in T(m, s, p)$ and $\B \in T(s, n, p)$, the T-product of $\A$ and $\B$ is defined as
$$\A * \B := {\rm fold}({\rm bcirc}(\A){\rm unfold}(\B)) \in T(m, n, p).$$

For $\U \in T(m, m, p)$, if  $\U * \U^\top = \U^\top * \U = \I_{mmp}$, then $\U$ is called an orthogonal tensor.   The set of all $m \times m \times p$ orthogonal tensors is denoted as $O(m, m, p)$, which is a group with respect to t-product.

\section{The Kilmer-Martin Mapping}

In this paper, we denote
$$\omega = e^{2\pi\sqrt{-1} \over p}.$$
This is a special complex number which will be involved in some of our discussion.
The $p \times p$ Discrete Fourier Transform (DFT) matrix $F_p$ has the form
$$F_p = \begin{bmatrix}
1 & 1 & \cdots & 1 & 1 \\
1 & \omega & \cdots  & \omega^{p-2} & \omega^{p-1} \\
\vdots & \vdots & \ddots& \vdots & \vdots \\
1 & \omega^{p-2} &  \cdots & \omega^{(p-2)(p-2)} & \omega^{(p-2)(p-1)}\\
1 & \omega^{p-1} &  \cdots & \omega^{(p-2)(p-1)} & \omega^{(p-1)(p-1)}
\end{bmatrix}.$$
The conjugate transpose of $F_p$ is denoted as $F_p^*$, the inverse of $F_p$ is $F_p^{-1}=\dfrac{1}{p}F_p^*$.

For $\A \in T(m, n, p)$, we may block-diagonalize bcirc$(\A)$ as
$$\Delta(\A) := (F_p \otimes I_m) {\rm bcirc}(\A) (F_p^{-1} \otimes I_n) = \begin{bmatrix}
\Delta^{(1)} &  &  & \\
 & \Delta^{(2)} &  & \\
 &  & \ddots& \\
 & &  & \Delta^{(p)} \\
\end{bmatrix},$$
where $\otimes$ denotes the Kronecker product, $\Delta^{(k)} \in \CC^{m \times n}$ for $k = 1, 2, \ldots, p$.
Then we have
\begin{equation} \label{e3.3-1}
\Delta^{(k)} \equiv \Delta^{(k)}(\A) = \sum_{l=1}^p \omega^{(l-1)(k-1)} A^{(l)},
\end{equation}
for $k = 1, 2, \ldots, p$.

For each matrix $\Delta^{(k)}$, compute its SVD
$$\Delta^{(k)} = \Phi^{(k)}D^{(k)}\left({\Psi^{(k)}}\right)^*,$$
where $\Phi^{(k)} \in \CC^{m \times m}$ and $\Psi^{(k)} \in \CC^{n \times n}$ are unitary matrices, $D^{(k)} \in \Re^{m \times n}$ is a nonnegative diagonal matrix, the singular values of $\Delta^{(k)}$ follow the standard nonincreasing order.  Denote
$$D(\A) :=  \begin{bmatrix}
D^{(1)} &  &  & \\
 & D^{(2)} &  & \\
 &  & \ddots& \\
 & &  & D^{(p)} \\
\end{bmatrix}.$$
We also denote the $i$th diagonal entry of $D^{(k)}$ as $\D(i, i, k)$.

Let
$$\S = G(\A) :=  {\rm bcirc}^{-1}\left((F_p^{-1} \otimes I_m)D(\A)(F_p \otimes I_n)\right).$$
Then $\S = G(\A) \in F(m, n, p)$.  In particular, by the above equality, we have
\begin{equation} \label{e2.2}
\S(i, i, k) = {1 \over p}\sum_{l=1}^p \bar \omega^{(k-1)(l-1)}\D(i, i, l),
\end{equation}
for $i = 1, 2,  \ldots, \min \{ m, n \}$, $k = 1, 2, \ldots, p$.

We call $G(\cdot)$ the Kilmer-Martin mapping.

Let
$$\Phi(\A) :=  \begin{bmatrix}
\Phi^{(1)} &  &  & \\
 & \Phi^{(2)} &  & \\
 &  & \ddots& \\
 & &  & \Phi^{(p)} \\
\end{bmatrix},$$
$$\Psi(\A) :=  \begin{bmatrix}
\Psi^{(1)} &  &  & \\
 & \Psi^{(2)} &  & \\
 &  & \ddots& \\
 & &  & \Psi^{(p)} \\
\end{bmatrix},$$
$$\U = \U(\A) = {\rm bcirc}^{-1}\left((F_p^{-1} \otimes I_m)\Phi(\A)(F_p \otimes I_n)\right),$$
$$\V = \V(\A) = {\rm bcirc}^{-1}\left((F_p^{-1}\otimes I_m)\Psi(\A)(F_p \otimes I_n)\right).$$
Then $\U \in O(m, m, p)$ and $\V \in O(n, n, p)$, and we have
\begin{equation} \label{e3.1}
\A = \U * \S * \V^\top.
\end{equation}

By \cite{KM11, KMP08}, we have
\begin{equation} \label{e4.1}
 \sum_{k=1}^p \S(1, 1, k)^2 \ge \sum_{k=1}^p \S(2, 2, k)^2 \ge \ldots  \ge  \sum_{k=1}^p \S(\min \{ m,n \}, \min \{ m,n \}, k)^2.
\end{equation}

In \cite{QY21}, the $i$th largest T-singular value of $\A$ is defined as
$$\sigma_i := \sqrt{\sum_{k=1}^p \S(i, i, k)^2},$$
for $i = 1, 2,  \ldots, \min \{ m, n \}$.   T-singular values are used there to define the tail energy for the error estimate of a proposed tensor sketching algorithm.

\section{s-Diagonal Tensors}

\begin{definition}   Denote the Kilmer-Martin mapping as $G(\cdot)$.   Let $\S \in F(m, n, p)$.   If there is $\A \in T(m, n, p)$ such that $G(\A) = \S$,  then we say that $\S$ is an s-diagonal tensor.    The set of all $m \times n \times p$ s-diagonal tensors is denoted as $S(m, n, p)$.   Let $\A \in T(m, n, p)$.  If there exist $\U \in O(m, m, p)$, $\V \in O(n, n, p)$ and $\S \in S(m, n, p)$ such that
\begin{equation} \label{e4.5-2}
\A = \U * \S * \V^\top,
\end{equation}
then we say that $\A$ has an ST-SVD (Standard T-SVD) factorization (\ref{e4.5-2}).
\end{definition}

Note that in (\ref{e4.5-2}), we do not require that $\U$ and $\V$ are resulted from the Kilmer-Martin process, while (\ref{e3.1}) shows that such $\U$ and $\V$ always exist as long as $\S = G(\A)$.


The following theorem is Theorem 3.2 of \cite{QLLO21}.

\begin{theorem} \label{t4.2}
Suppose that $\A, \B \in T(m, n, p)$ and they are orthogonally equivalent.  Then $G(\A) = G(\B)$.
\end{theorem}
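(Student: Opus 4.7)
The plan is to pass everything through the block-diagonalization $\Delta(\cdot) = (F_p \otimes I)\,\mathrm{bcirc}(\cdot)\,(F_p^{-1} \otimes I)$ that underlies the Kilmer--Martin mapping, and to exploit the fact that the $D^{(k)}$ pieces of $D(\A)$ are determined purely by the singular values of $\Delta^{(k)}(\A)$. What I have to show, in effect, is that orthogonal equivalence of $\A$ and $\B$ at the tensor level translates, block by block in the Fourier domain, into a unitary two-sided equivalence of the matrices $\Delta^{(k)}(\A)$ and $\Delta^{(k)}(\B)$; unitary equivalence preserves singular values, hence preserves each $D^{(k)}$, hence preserves $D$ and therefore the image under $G$.

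First I would record the standard fact that $\mathrm{bcirc}$ is a ring homomorphism from $(T(m,n,p),*)$ into block circulants, so that the conjugation by $F_p \otimes I$ turns the t-product into ordinary matrix multiplication in block-diagonal form. Writing $\A = \U * \B * \V^\top$ and applying $\Delta(\cdot)$ to both sides, I obtain blockwise
\begin{equation*}
\Delta^{(k)}(\A) = \Delta^{(k)}(\U)\,\Delta^{(k)}(\B)\,\Delta^{(k)}(\V^\top),\qquad k=1,2,\ldots,p.
\end{equation*}
Next I would verify the two auxiliary facts that make this useful: (i) if $\U \in O(m,m,p)$ then each block $\Delta^{(k)}(\U)$ is a unitary matrix in $\CC^{m\times m}$, and similarly for $\V$; (ii) the tensor transpose corresponds in the Fourier domain to blockwise conjugate transpose, so that $\Delta^{(k)}(\V^\top) = \bigl(\Delta^{(k)}(\V)\bigr)^*$. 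Fact (i) follows by applying $\Delta$ to the relation $\U * \U^\top = \U^\top * \U = \I_{mmp}$ and noting that $\Delta(\I_{mmp})$ is the identity; fact (ii) is a direct computation from formula \eqref{e3.3-1} using $\overline{\omega^{(l-1)(k-1)}} = \omega^{(p-l+1-1)(k-1)}$ together with the permutation of frontal slices that defines $\A^\top$.

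Combining these, for every $k$ the matrix $\Delta^{(k)}(\A)$ is obtained from $\Delta^{(k)}(\B)$ by left- and right-multiplication with unitaries, so $\Delta^{(k)}(\A)$ and $\Delta^{(k)}(\B)$ have identical singular values arranged in the same nonincreasing order. Consequently the nonnegative diagonal factors produced by the SVD step satisfy $D^{(k)}(\A) = D^{(k)}(\B)$ for each $k$, giving $D(\A) = D(\B)$. Applying the final step of $G$, namely $G(\cdot) = \mathrm{bcirc}^{-1}\bigl((F_p^{-1}\otimes I_m)\,D(\cdot)\,(F_p\otimes I_n)\bigr)$, yields $G(\A) = G(\B)$.

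The only genuinely delicate step, and the one I expect to be the main obstacle, is the transpose identity (ii): one must be careful that the cyclic reversal of frontal slices built into $\A^\top$ really does pair up with complex conjugation of the DFT weights to give $\Delta^{(k)}(\V^\top) = (\Delta^{(k)}(\V))^*$. Everything else is a short bookkeeping argument; once (i) and (ii) are in hand, unitary invariance of singular values closes the proof.
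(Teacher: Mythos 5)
Your argument is correct and is essentially the canonical one: the paper itself does not prove Theorem \ref{t4.2} but imports it as Theorem 3.2 of \cite{QLLO21}, and the proof there proceeds exactly along your lines (block-diagonalize via $F_p\otimes I$, observe that orthogonal tensors become blockwise unitary matrices and that transpose becomes blockwise conjugate transpose, then invoke unitary invariance of singular values to get $D(\A)=D(\B)$ and hence $G(\A)=G(\B)$). One small correction to your step (ii): the conjugation identity should read $\overline{\omega^{(l-1)(k-1)}}=\omega^{(p-l+1)(k-1)}$, matching the slice permutation $l\mapsto p-l+2$ for $l\ge 2$ in the definition of $\A^\top$; alternatively you can avoid the index bookkeeping entirely by using the paper's definition $\A^\top=\mathrm{bcirc}^{-1}[(\mathrm{bcirc}(\A))^\top]$ together with $F_p^*=pF_p^{-1}$, which gives $\Delta(\V^\top)=\Delta(\V)^*$ in one line. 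Neither point affects the validity of the proof.
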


From this theorem, we have a checkable condition to determine a given tensor in $F(m, n, p)$ is in $S(m, n, p)$ or not.

\begin{theorem} \label{t4.3}
Suppose that $\S \in F(m, n, p)$.  Then $\S \in S(m, n, p)$ if and only if $G(\S) = \S$.
\end{theorem}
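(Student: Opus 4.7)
The plan is to prove the two directions separately, with the forward direction being nearly a tautology and the reverse direction reducing to Theorem \ref{t4.2} by way of the factorization (\ref{e3.1}).

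First I would dispose of the ``if'' direction. Assume $G(\S) = \S$. Since $\S \in F(m, n, p) \subseteq T(m, n, p)$, setting $\A := \S$ gives $\A \in T(m, n, p)$ with $G(\A) = \S$, so by the definition of $S(m, n, p)$ we obtain $\S \in S(m, n, p)$ at once.

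For the ``only if'' direction, suppose $\S \in S(m, n, p)$. By definition there exists $\A \in T(m, n, p)$ with $G(\A) = \S$. The key observation is that the Kilmer--Martin construction in Section 3 does not merely produce $\S$ but also produces specific orthogonal tensors $\U = \U(\A) \in O(m, m, p)$ and $\V = \V(\A) \in O(n, n, p)$ satisfying the T-SVD factorization (\ref{e3.1}), namely $\A = \U * \S * \V^\top$. This equation is precisely the statement that $\A$ and $\S$ are orthogonally equivalent in the sense defined in Section 4. Invoking Theorem \ref{t4.2} then yields $G(\A) = G(\S)$, and combining this with $G(\A) = \S$ gives $G(\S) = \S$, as required.

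I do not foresee a real obstacle here: the entire content of the equivalence is already packaged in (\ref{e3.1}) and Theorem \ref{t4.2}. The only point requiring a little care is to make sure that when we treat $\S$ itself as an input to $G$, the construction of the $\Delta^{(k)}$, the unitary factors $\Phi^{(k)}, \Psi^{(k)}$, and the block-diagonal matrix $D(\S)$ is well defined for an f-diagonal tensor just as for a general tensor in $T(m, n, p)$. Since $F(m, n, p) \subseteq T(m, n, p)$, this is automatic, so nothing extra needs to be checked.
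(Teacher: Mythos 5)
Your proof is correct and follows essentially the same route as the paper's: the ``if'' direction is immediate from the definition of $S(m,n,p)$, and the ``only if'' direction uses the factorization (\ref{e3.1}) to conclude that $\A$ and $\S = G(\A)$ are orthogonally equivalent and then applies Theorem \ref{t4.2}. No gaps.
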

\begin{proof} If $G(\S) = \S$, then by definition, $\S \in S(m, n, p)$.   On the other hand, suppose $\S \in S(m, n, p)$.  Then there is $\A \in T(m, n, p)$ such that $\S = G(\A)$.   There exist $\U \in O(m, m, p)$ and $\V \in O(n, n, p)$ such that (\ref{e3.1}) holds.   Then $\A$ and $\S$ are orthogonally equivalent.  By Theorem \ref{t4.2}, we have $G(\S) = G(\A) = \S$.
\end{proof}

The orthogonal equivalence is an equivalence relation of $T(m, n, p)$.   We have the following orthogonal partition theorem.

\begin{theorem} \label{t4.4}
The linear space $T(m, n, p)$ is partitioned to equivalence classes by the orthogonal equivalence relation.  Each orthogonal equivalence class has a unique s-diagonal tensor. All the nonzero orthogonal equivalence classes have the same cardinality in the sense that a one-to-one relation can be established between any two such classes. Two s-diagonal tensors are equal if they are orthogonally equivalent.   Third order tensors in an orthogonal equivalence class have the same tensor tubal rank and T-singular values, which can be calculated from the entries of the s-diagonal tensor in that class.
\end{theorem}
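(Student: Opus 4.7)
The plan is to prove the theorem's claims in turn, with the main engines being Theorems \ref{t4.2} and \ref{t4.3} together with the factorization (\ref{e3.1}).

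First, I would verify that orthogonal equivalence is an equivalence relation on $T(m,n,p)$: reflexivity via $\A = \I_{mmp} * \A * \I_{nnp}^\top$, symmetry using that $\U \in O(m,m,p)$ and $\V \in O(n,n,p)$ implies $\U^\top \in O(m,m,p)$ and $\V^\top \in O(n,n,p)$, and transitivity from closure of the orthogonal tensor groups under t-product. This immediately yields the claimed partition of $T(m,n,p)$.

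To exhibit a unique s-diagonal representative in each class, take any $\A \in T(m,n,p)$ and set $\S := G(\A)$. Equation (\ref{e3.1}) supplies $\U, \V$ orthogonal with $\A = \U * \S * \V^\top$, so $\A$ and $\S$ are orthogonally equivalent. Theorem \ref{t4.2} then gives $G(\S) = G(\A) = \S$, and Theorem \ref{t4.3} confirms $\S \in S(m,n,p)$, proving existence. For uniqueness, if $\S_1, \S_2 \in S(m,n,p)$ share a class, Theorem \ref{t4.2} forces $G(\S_1) = G(\S_2)$, while Theorem \ref{t4.3} gives $G(\S_i) = \S_i$, hence $\S_1 = \S_2$. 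This uniqueness also settles the fourth assertion: orthogonally equivalent s-diagonal tensors must be equal.

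For the invariance of the tubal rank and T-singular values within each class, observe that if $\A$ and $\B$ are orthogonally equivalent then $G(\A) = G(\B) =: \S$ by Theorem \ref{t4.2}. Because $\sigma_i = \sqrt{\sum_{k=1}^p \S(i,i,k)^2}$ and the tubal rank is the number of indices $i$ for which the $i$th diagonal tube of $\S$ is nontrivial, both invariants depend only on $\S$ and are therefore constant on each class, readily computed from the entries of its unique s-diagonal representative.

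The subtlest piece is the cardinality assertion. My plan is to argue that for every nonzero s-diagonal tensor $\S$, the orbit $\{\U * \S * \V^\top : \U \in O(m,m,p), \V \in O(n,n,p)\}$ has the cardinality of the continuum: a one-parameter subgroup of $O(m,m,p)$ acting nontrivially on a nonzero frontal slice $\Delta^{(k)}(\S)$ already produces a continuum of distinct orbit points, while the orbit is bounded above in cardinality by $O(m,m,p) \times O(n,n,p) \subset T(m,m,p) \times T(n,n,p)$, itself of continuum size. Any two sets of continuum cardinality admit a bijection, which supplies the required one-to-one relation. The main obstacle is that the stabilizer of $\S$ under this action varies with $\S$ (repeated T-singular values give larger stabilizers), so no canonical equivariant bijection is available; however, the theorem's phrasing requires only a set-theoretic bijection, which the cardinality argument furnishes.
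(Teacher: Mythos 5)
Your treatment of the partition, of the existence and uniqueness of the s-diagonal representative, and of the invariance of the tubal rank and T-singular values coincides with the paper's proof: both rest on Theorem \ref{t4.2}, Theorem \ref{t4.3} and the factorization (\ref{e3.1}). The one place you genuinely diverge is the cardinality claim. The paper writes the members of the two classes as $\A_1 = \U * \S_1 * \V^\top$ and $\A_2 = \U * \S_2 * \V^\top$ and declares the correspondence $\A_1 \leftrightarrow \A_2$ indexed by $(\U,\V)$ to be one-to-one; as you observe, this is not automatically a well-defined bijection, since the pair $(\U,\V)$ representing a given $\A_1$ is not unique and the stabilizers of $\S_1$ and $\S_2$ may differ. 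Your replacement --- both orbits have the cardinality of the continuum, hence some set-theoretic bijection exists --- is the more careful route and works whenever $\max\{m,n\} \ge 2$: choosing $k$ with $D^{(k)} \neq 0$ and inserting a real rotation $R_\theta \oplus I$ into the slices $\Phi^{(k)}$ and $\Phi^{(p-k+2)}$ yields continuum many distinct orbit points, while the ambient space $T(m,n,p)$ bounds the orbit above. Be aware, though, that your blanket assertion that every nonzero orbit has continuum cardinality fails in the degenerate case $m=n=1$: there $O(1,1,2)$ is a finite group, and in $T(1,1,2)$ the classes of the s-diagonal tensors with tubes $(1,1)$ and $(1,0)$ have $2$ and $4$ elements respectively, so the cardinality statement of the theorem itself (and the paper's correspondence argument) also breaks down there. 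Modulo that shared edge case, your argument is correct and in fact repairs a genuine gap in the paper's own proof of this part.
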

\begin{proof}  It is easy to see that the orthogonal equivalence is an equivalence relation of $T(m, n, p)$.
Then we have the first conclusion.  By Theorem \ref{t4.2}, two s-diagonal tensors are equal if they are orthogonally equivalent.   Then each orthogonal equivalence class has a unique s-diagonal tensor.  Suppose two orthogonal equivalence classes of nonzero tensors have their s-diagonal tensors $\S_1$ and $\S_2$.  Then any tensor in these two classes have the forms
$$\A_1 = \U * \S_1 * \V^\top$$
and
$$\A_2 = \U * \S_2 * \V^\top,$$
respectively, where $\U \in O(m, m, p)$ and $\V \in O(n, n, p)$.   Then there is a one-to-one relation  $\A_1 \leftrightarrow \A_2$ between these two classes, and the two classes have the same cardinality.  By the definitions of tensor tubal rank and T-singular values, we have the remaining conclusions.
\end{proof}

The following proposition was proved in \cite{QY21}.

\begin{proposition} \label{p4.5}
Suppose that $\A \in T(m, n, p)$ has T-singular values
$$\sigma_1 \ge \sigma_2 \ge \ldots \ge \sigma_{\min \{m, n\}},$$
and $1 \le s \le \min \{m, n\}$.   Then
$$\sum_{l=1}^s \sigma_l^2 \ge \sum_{l=1}^s \sum_{k=1}^p \A(i_l, j_l, k)^2,$$
where $1 \le i_l \le m$, $1 \le j_l \le n$, $(i_1, j_1), \ldots, (i_s, j_s)$ are distinct pairs of indices.
\end{proposition}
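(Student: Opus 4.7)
The plan is to pass to the Fourier domain, where the inequality decouples across the frontal slices $\Delta^{(k)}$, reduce each slice to a standard matrix singular-value fact, and then sum using Parseval's identity twice.

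First, I would record the two Parseval relations that convert the tensor quantities into Fourier-domain quantities. From (\ref{e3.3-1}), $[\Delta^{(k)}(i,j)]_{k=1}^p$ is the DFT (by $F_p$) of the tube $[\A(i,j,l)]_{l=1}^p$, and from (\ref{e2.2}), $[\D(i,i,k)]_{k=1}^p$ is the DFT of $[\S(i,i,l)]_{l=1}^p$. Using $F_p^*F_p = pI_p$, Parseval yields
\begin{equation*}
\sum_{k=1}^p |\Delta^{(k)}(i,j)|^2 = p\sum_{l=1}^p \A(i,j,l)^2
\end{equation*}
and
\begin{equation*}
\sum_{k=1}^p \D(i,i,k)^2 = p\sum_{l=1}^p \S(i,i,l)^2 = p\sigma_i^2.
\end{equation*}

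Second, I would prove the following slice-wise matrix inequality for each fixed $k$: for any $s$ distinct index pairs $(i_1,j_1),\ldots,(i_s,j_s)$,
\begin{equation*}
\sum_{l=1}^s |\Delta^{(k)}(i_l,j_l)|^2 \le \sum_{l=1}^s \D(l,l,k)^2.
\end{equation*}
Set $I=\{i_1,\ldots,i_s\}\subseteq\{1,\ldots,m\}$ and $J=\{j_1,\ldots,j_s\}\subseteq\{1,\ldots,n\}$, so $|I|,|J|\le s$. The left-hand side is at most the squared Frobenius norm of the rectangular submatrix $\Delta^{(k)}[I,J]$, which equals $\sum_l \sigma_l(\Delta^{(k)}[I,J])^2$ and has at most $\min(|I|,|J|)\le s$ nonzero terms. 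Submatrix interlacing of singular values then gives $\sigma_l(\Delta^{(k)}[I,J])\le \sigma_l(\Delta^{(k)}) = \D(l,l,k)$, where the last equality holds because the diagonal entries of $D^{(k)}$ are the singular values of $\Delta^{(k)}$ in nonincreasing order. This caps the sum by $\sum_{l=1}^s \D(l,l,k)^2$.

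Third, I would sum the slice inequality over $k = 1,\ldots,p$ and invoke both Parseval identities from the first step: the left-hand side becomes $p\sum_{l=1}^s\sum_{k=1}^p\A(i_l,j_l,k)^2$, and the right-hand side becomes $p\sum_{l=1}^s\sigma_l^2$. Dividing by $p$ finishes the proof.

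The main obstacle is the slice-wise inequality in the second step, because the pairs $(i_l,j_l)$ need not choose distinct rows or distinct columns, so the standard orthonormality-based bound $\sum_l|x_l^*M y_l|^2\le\sum_{l=1}^s\sigma_l(M)^2$ does not apply verbatim at the chosen positions. Routing through the rectangular submatrix $\Delta^{(k)}[I,J]$ restores the orthonormal framework and allows singular-value interlacing to deliver the top-$s$ cap.
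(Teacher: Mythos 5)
Your argument is correct. Note that the paper itself does not prove Proposition \ref{p4.5}; it only cites \cite{QY21}, so there is no in-paper proof to compare against. Your route is the natural one and is complete: the two Parseval identities are exactly what equations (\ref{e3.3-1}) and (\ref{e2.2}) encode (each tube of $\A$ maps to the corresponding tube of $\Delta(\A)$ under $F_p$, and likewise $\D(i,i,:) = F_p\,\S(i,i,:)$, giving the factors of $p$ on both sides, which then cancel). The only step needing real care is the slice-wise bound $\sum_{l=1}^s |\Delta^{(k)}(i_l,j_l)|^2 \le \sum_{l=1}^s \D(l,l,k)^2$, and your handling of it is sound: since the pairs are distinct they are counted at most once inside the rectangular block $\Delta^{(k)}[I,J]$, whose squared Frobenius norm has at most $\min(|I|,|J|)\le s$ nonzero singular-value contributions, and writing $\Delta^{(k)}[I,J]=P\Delta^{(k)}Q$ with selection matrices of norm one gives $\sigma_t(\Delta^{(k)}[I,J])\le\sigma_t(\Delta^{(k)})=\D(t,t,k)$. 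You are also right to flag that the pairs need not lie in distinct rows and columns, which is precisely why the detour through the submatrix (rather than a partial-diagonal or trace argument) is needed; this is the point most likely to be glossed over. The argument is a valid, self-contained proof of the proposition.
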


By Theorems \ref{t4.2} and \ref{t4.4}, Proposition \ref{p4.5} indicates that the T-singular values of that class are linked with some extremal values of third order tensors in that class.

\begin{proposition}
If there are $\A \in T(m, n, p)$, $\U \in O(m, m, p)$, $\V \in O(n, n, p)$ and $\S \in S(m, n, p)$, such that
\begin{equation} \label{e4.5-1}
\A = \U * \S * \V^\top,
\end{equation}
then $\S = G(\A)$ and (\ref{e4.5-1}) is an ST-SVD of $\A$.
\end{proposition}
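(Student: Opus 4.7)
The plan is to chain together the two previous theorems with the definition of orthogonal equivalence. First I would observe that the hypothesis $\A = \U * \S * \V^\top$ with $\U \in O(m,m,p)$ and $\V \in O(n,n,p)$ is exactly the definition of $\A$ and $\S$ being orthogonally equivalent, as given in the introduction. So this is essentially ``free'' — no computation is required to reach the orthogonal-equivalence hypothesis of Theorem \ref{t4.2}.

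Next I would apply Theorem \ref{t4.2} to $\A$ and $\S$ to conclude $G(\A) = G(\S)$. Then, since $\S \in S(m,n,p)$ by hypothesis, Theorem \ref{t4.3} gives $G(\S) = \S$. Combining the two equalities yields $\S = G(\A)$, which is the first assertion. The second assertion — that (\ref{e4.5-1}) is an ST-SVD of $\A$ — then follows immediately from the Definition of ST-SVD factorization, because $\U$, $\V$ and $\S$ already have the required memberships in $O(m,m,p)$, $O(n,n,p)$ and $S(m,n,p)$, respectively.

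There is no genuine obstacle here: the statement is essentially a corollary whose content is that any orthogonal decomposition of $\A$ whose middle factor happens to be s-diagonal must produce the \emph{same} s-diagonal tensor as the Kilmer--Martin mapping $G$. The only conceptual point worth flagging in the write-up is that this uniqueness does \emph{not} extend to $\U$ and $\V$ themselves — the proposition does not claim that $\U = \U(\A)$ or $\V = \V(\A)$ from the Kilmer--Martin construction, only that the middle factor is forced, which is why the remark immediately preceding Theorem \ref{t4.2} about not requiring $\U,\V$ to come from the Kilmer--Martin process is consistent with the present statement. The proof itself should be two or three sentences once Theorems \ref{t4.2} and \ref{t4.3} are invoked.
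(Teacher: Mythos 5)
Your proposal is correct and follows exactly the paper's own argument: the hypothesis is by definition an orthogonal equivalence between $\A$ and $\S$, so Theorem \ref{t4.2} gives $G(\A)=G(\S)$, and Theorem \ref{t4.3} gives $G(\S)=\S$ since $\S\in S(m,n,p)$. The paper compresses this into two sentences, but the logical content is identical.
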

\begin{proof} By (\ref{e4.5-1}), $\A$ and $\S$ are orthogonally equivalent.  Then $G(\A) = G(\S) = \S$ as $\S \in S(m, n, p)$.
\end{proof}

Thus, if we have (\ref{e4.5-1}) and some sufficient conditions to show that $\S \in S(m, n, p)$, then we may conclude $\S = G(\A)$ without calculate $G(\A)$.   In Sections 6 and 7, we will present some sufficient and necessary conditions to determine if $\S \in S(m, n, p)$, for a given f-diagonal tensor $\S$.

\section{Necessary Conditions for s-Diagonal Tensors}

There are four major features of  s-diagonal tensors.   Suppose that $\S \in S(m, n, p)$.   Then $\S$ has the following four properties:

\medskip

{\bf (1) Tubal 2-Norm Decay Property}, namely (\ref{e4.1}).
The 2-norms of the tubal vectors $\S(i, i, :)$ decay with $i = 1, 2,  \ldots, \min \{ m,n \}$.  It was used in \cite{QY21} to define T-singular values and tail energy for error estimate of tensor sketching algorithms.
This property can be found in \cite{KMP08}.  A proof for it can be found in \cite{LHPQ21}.

\medskip

{\bf (2) First Frontal Slice Decay Property}, namely
\begin{equation} \label{e4.2}
 \S(1, 1, 1) \ge  \S(2, 2, 1) \ge \ldots  \ge  \S(\min \{ m,n \}, \min \{ m,n \}, 1) \ge 0.
\end{equation}
Though some negative entries may appear in $\S$, the entries of its first frontal slice must be nonnegative and have a decay property.   This property can be found in \cite{Lu18}.  A proof for it can be found in \cite{LHPQ21}.

\medskip

{\bf (3) Third Mode Symmetry Property}, namely
\begin{equation} \label{e4.2a}
\S(i, i, k) = \S(i, i, p-k+2),
\end{equation}
for $i = 1, 2,  \ldots, \min \{ m, n \}$ and $k = 2, 3, \ldots, 1 + \lfloor {p-1 \over 2}\rfloor$.  The tensor $\S$ is symmetric for the third mode in the above sense.  Equivalently, this condition may be stated as that bcirc$(\S)$ is symmetric in the sense of block.
We identify this property in this paper and will prove it in the following theorem.

\medskip

{\bf (4) Tubal Leading Entry Maximum Property}, namely
\begin{equation} \label{e4.3}
\S(i, i, 1) \ge |\S(i, i, k)|,
\end{equation}
for $i = 1, 2,  \ldots, \min \{ m, n \}$ and $k = 1, 2, \ldots, p$.   For each tube of $\S$, its first entry takes the maximum value.    We identify this property in this paper and will prove it in the following theorem.

\medskip

We have the following theorem.

\begin{theorem}   \label{t4.1}
Suppose that $\S \in S(m, n, p)$.   Then $\S$ has the tubal 2-norm decay property (\ref{e4.1}), the
first frontal slice decay property (\ref{e4.2}), the third mode symmetry property (\ref{e4.2a}), and the
tubal leading entry maximum property (\ref{e4.3}).
\end{theorem}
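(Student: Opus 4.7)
The plan is to reduce everything to the explicit formula (\ref{e2.2}), which expresses each entry $\S(i,i,k)$ as a discrete inverse Fourier transform of the nonnegative singular values $\D(i,i,l)$ of the blocks $\Delta^{(l)}(\A)$. Since these singular values are, by construction, nonnegative and nonincreasing in $i$ within each fixed $l$, properties (1), (2) and (4) are essentially Parseval-type or triangle-inequality consequences, while (3) follows from the fact that the inverse DFT of a real sequence produces a conjugate-symmetric output and $\S$ is real.

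Concretely, I would proceed as follows. Pick $\A \in T(m,n,p)$ with $G(\A) = \S$ (which exists by the definition of $S(m,n,p)$). For property (1), Parseval's identity applied to the length-$p$ vector $(\D(i,i,1),\ldots,\D(i,i,p))$ yields $\sum_{k=1}^p \S(i,i,k)^2 = \frac{1}{p}\sum_{l=1}^p \D(i,i,l)^2$, and since $\D(i,i,l) \ge \D(i+1,i+1,l)$ for every $l$, the right-hand side is nonincreasing in $i$. This recovers (\ref{e4.1}); as noted, this also appears in \cite{KMP08, LHPQ21}. For property (2), setting $k=1$ in (\ref{e2.2}) gives $\S(i,i,1) = \frac{1}{p}\sum_{l=1}^p \D(i,i,l)$, a nonnegative average of nonnegative singular values that is again nonincreasing in $i$ for the same reason; this also appears in \cite{Lu18, LHPQ21}.

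The two newly identified properties are handled by direct manipulation of (\ref{e2.2}). For the third mode symmetry (\ref{e4.2a}), I would substitute $k \mapsto p-k+2$ and use $\bar\omega^{p(l-1)} = 1$ to compute
\[
\S(i,i,p-k+2) = \frac{1}{p}\sum_{l=1}^p \bar\omega^{(p-k+1)(l-1)} \D(i,i,l) = \frac{1}{p}\sum_{l=1}^p \omega^{(k-1)(l-1)} \D(i,i,l) = \overline{\S(i,i,k)},
\]
and then invoke the fact that $\S$ is real (being in $F(m,n,p)$) to drop the conjugation. For the tubal leading entry maximum property (\ref{e4.3}), I would apply the triangle inequality in (\ref{e2.2}) together with $|\bar\omega^{(k-1)(l-1)}| = 1$ and $\D(i,i,l) \ge 0$ to obtain
\[
|\S(i,i,k)| \le \frac{1}{p}\sum_{l=1}^p |\bar\omega^{(k-1)(l-1)}|\,\D(i,i,l) = \frac{1}{p}\sum_{l=1}^p \D(i,i,l) = \S(i,i,1).
\]

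There is no serious obstacle here: once one has the explicit formula (\ref{e2.2}) and remembers that the $\D(i,i,l)$ are nonnegative (singular values) and nonincreasing in $i$ (standard SVD ordering within each block), all four properties drop out in a few lines. The only mild subtlety is keeping the indexing straight in (3), namely observing that $(p-k+1)(l-1) \equiv -(k-1)(l-1) \pmod p$, so that the DFT roots conjugate correctly, and using the reality of $\S$ to close the argument.
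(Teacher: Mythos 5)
Your proposal is correct, and for the two properties the paper actually proves here --- the third mode symmetry (\ref{e4.2a}) and the tubal leading entry maximum property (\ref{e4.3}) --- your argument is essentially identical to the paper's: the same conjugation of the DFT exponent via $(p-k+1)(l-1)\equiv -(k-1)(l-1)\pmod p$ followed by reality of $\S$, and the same triangle inequality using $\D(i,i,l)\ge 0$. The only difference is that for properties (\ref{e4.1}) and (\ref{e4.2}) the paper simply cites \cite{KMP08,Lu18,LHPQ21}, whereas you supply short self-contained proofs (Parseval's identity giving $\sum_k \S(i,i,k)^2 = \frac{1}{p}\sum_l \D(i,i,l)^2$, and the $k=1$ evaluation of (\ref{e2.2})); both of these are valid and rest correctly on the within-block nonincreasing ordering $\D(i,i,l)\ge\D(i+1,i+1,l)$, so this is a harmless strengthening rather than a different route.
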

\begin{proof} As stated above, the tubal 2-norm decay property (\ref{e4.1}) and the
first frontal slice decay property (\ref{e4.2}) have been identified before and proved elsewhere.

We now prove the third mode symmetry property (\ref{e4.2a}).  Since $\S \in S(m, n, p)$, there is $\A \in T(m, n, p)$ such that $\S = G(\A)$.   Let $\D = D(\A)$.  Then by (\ref{e2.2}),
\begin{equation} \label{e4.6}
\S(i, i, k) = {1 \over p}\sum_{l=1}^p \bar \omega^{(k-1)(l-1)}\D(i, i, l),
\end{equation}
for $i = 1, 2, \ldots, \min \{ m, n \}$, $k = 1, 2, \ldots, p$.   Then for $k = 2, 3, \ldots, 1 + \lfloor {p-1 \over 2}\rfloor$,
\begin{eqnarray*}
\S(i, i, p-k+2) & = & {1 \over p}\sum_{l=1}^p \bar \omega^{(p-k+2-1)(l-1)}\D(i, i, l)\\
& = & {1 \over p}\sum_{l=1}^p \bar \omega^{(1-k)(l-1)}\D(i, i, l).
\end{eqnarray*}
Since $\bar \omega^{(k-1)(l-1)}$ and $\bar \omega^{(1-k)(l-1)}$ are conjugate to each other, they have the same real part.   Thus, $\S(i, i, k)$ and $\S(i, i, p-k+2)$ have the same real part.   However, $\S(i, i, k)$ and $\S(i, i, p-k+2)$ are real, thus, they are equal.  We have (\ref{e4.2a}).

We then prove the tubal leading entry maximum property (\ref{e4.3}).  By (\ref{e4.6}),
for $i = 1, 2,  \ldots, \min \{ m, n \}$, $k = 1, 2, \ldots, p$, we have
\begin{eqnarray*}
|\S(i, i, k)| & = & {1 \over p} \left|\sum_{l=1}^p \bar \omega^{(k-1)(l-1)}\D(i, i, l)\right|\\
& \le & {1 \over p} \sum_{l=1}^p |\bar \omega^{(k-1)(l-1)}||\D(i, i, l)|\\
& \le & {1 \over p} \sum_{l=1}^p |\D(i, i, l)|\\
& = & {1 \over p} \sum_{l=1}^p \D(i, i, l)\\
& = & \S(i, i, 1).
\end{eqnarray*}
This proves (\ref{e4.3}).
\end{proof}

However, the tubal 2-norm decay property (\ref{e4.1}), the
first frontal slice decay property (\ref{e4.2}), the third mode symmetry property (\ref{e4.2a}), and the
tubal leading entry maximum property (\ref{e4.3}) are only necessary conditions for s-diagonal tensors.    For example, let $m = n = p = 3$, $\S(1, 1, 1) = 12$, $\S(2, 2, 1) = 8$, $\S(3, 3, 1) = 5$,
$\S(1, 1, 2) = \S(1, 1, 3) = 5$, the other entries of $\S$ are zero.   Then the four properties (\ref{e4.1}) and (\ref{e4.2}-\ref{e4.3}) are satisfied.  But we have $G(\S) \not = \S$.  By Theorem \ref{t4.3}, $\S \not \in S(3, 3, 3)$.  Thus, the four properties (\ref{e4.1}) and (\ref{e4.2}-\ref{e4.3}) are only necessary conditions for s-diagonal tensors.

\section{Sufficient and Necessary Conditions for s-Diagonal Tensors}

We now present a set of sufficient and necessary conditions for s-diagonal tensors.

\begin{theorem} \label{t6.1}
Suppose that $\S \in F(m, n, p)$ satisfies the third mode symmetry property (\ref{e4.2a}). Then $\Delta \equiv \Delta(\S)$ is real and diagonal.   This implies that for $i = 1, 2,  \ldots, \min \{ m, n \}$ and $k = 1, 2, \ldots, p$,
\begin{equation} \label{e5.8}
\sum_{l=1}^p \omega^{(l-1)(k-1)} \S(i, i, l) \equiv \Delta_{ii}^{(k)} \in \Re.
\end{equation}

Then $\S \in S(m, n, p)$ if and only if
for $i = 1, 2,  \ldots, \min \{ m, n \}$ and $k = 1, 2, \ldots, p$, we have
\begin{equation}  \label{e5.9}
\sum_{l=1}^p \omega^{(l-1)(k-1)} \S(i, i, l) \ge 0,
\end{equation}
and
\begin{equation} \label{e5.10}
\sum_{l=1}^p \omega^{(l-1)(k-1)} [\S(i, i, l)-\S(i+1,i+1, l)] \ge 0,
\end{equation}
where $\S(i+1,i+1, l) = 0$ for $i = \min \{ m, n \}$.
\end{theorem}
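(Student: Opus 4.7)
The plan is to handle the two parts of the statement in sequence: first the structural claim that $\Delta(\S)$ is real and diagonal under the third mode symmetry property, and then the ``iff'' characterization of $\S \in S(m,n,p)$.

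For the first part, diagonality of each block $\Delta^{(k)}$ is immediate from (3.3-1) since $\S$ is f-diagonal, so each $S^{(l)}$ is diagonal. To see that $\Delta^{(k)}$ is real, I would pair the index $l$ with $l' = p-l+2$ in the sum defining $\Delta_{ii}^{(k)}$. Using $\omega^p = 1$, the two phase factors $\omega^{(l-1)(k-1)}$ and $\omega^{(l'-1)(k-1)} = \omega^{-(l-1)(k-1)}$ are complex conjugates, while the symmetry $\S(i,i,l) = \S(i,i,p-l+2)$ from (4.2a) guarantees the corresponding coefficients are equal. Hence each paired contribution is $2\cos\bigl(2\pi(l-1)(k-1)/p\bigr)\S(i,i,l)$, which is real. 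The unpaired term at $l=1$ is trivially real, and if $p$ is even the self-paired middle index $l = p/2 + 1$ contributes $(-1)^{k-1}\S(i,i,l)$, which is also real. This establishes (5.8).

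For the ``iff'' part, the key observation is that Theorem \ref{t4.3} reduces $\S \in S(m,n,p)$ to checking $G(\S) = \S$. Because $\text{bcirc}$ and the DFT conjugation $(F_p \otimes I_m)(\cdot)(F_p^{-1}\otimes I_n)$ are bijections, the equality $G(\S) = \S$ is equivalent to $D(\S) = \Delta(\S)$. Now $\Delta(\S)$ is a real diagonal matrix with $i$th diagonal entry $\Delta_{ii}^{(k)}$ in the $k$th block, whereas $D(\S)$ is, by definition, the nonnegative diagonal matrix whose entries in each block are the singular values of $\Delta^{(k)}$ arranged in nonincreasing order. Thus $D(\S) = \Delta(\S)$ holds if and only if, for every $k$, (a) each $\Delta_{ii}^{(k)}$ is already nonnegative, and (b) the sequence $\Delta_{11}^{(k)}, \Delta_{22}^{(k)}, \ldots, \Delta_{\min\{m,n\}\min\{m,n\}}^{(k)}$ is already nonincreasing. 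Condition (a) is exactly (5.9) and condition (b), combined with the convention $\S(i+1,i+1,l) = 0$ at the last index, is exactly (5.10).

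I expect the main technical nuisance to be the careful bookkeeping in the pairing argument for reality of $\Delta(\S)$, particularly dealing cleanly with the two edge cases ($l=1$ always unpaired, and a self-paired middle term when $p$ is even), and then the subtle but essential point that the definition of $G$ uses the \emph{canonical} SVD with nonnegative singular values in nonincreasing order. Once those are in place, no further ingredients are needed beyond Theorem \ref{t4.3} and the invertibility of $\text{bcirc}$ and the block DFT conjugation; the whole proof is really a change-of-variables argument followed by reading off the conditions that make the real diagonal matrix $\Delta(\S)$ coincide with its own SVD diagonal.
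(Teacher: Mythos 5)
Your proposal is correct and follows essentially the same route as the paper: diagonality of each $\Delta^{(k)}$ from f-diagonality, reality via pairing $l$ with $p-l+2$ under the third mode symmetry property (with the unpaired $l=1$ term and the self-paired middle term for even $p$), and then reducing $\S \in S(m,n,p)$ to $G(\S)=\S$ via Theorem \ref{t4.3} and reading off that a real diagonal $\Delta^{(k)}$ equals its own SVD diagonal factor $D^{(k)}$ exactly when its diagonal entries are nonnegative and nonincreasing, which is precisely (\ref{e5.9}) and (\ref{e5.10}). No gaps; the only cosmetic difference is that you make the bijectivity of $\mathrm{bcirc}$ and the DFT conjugation explicit, which the paper leaves implicit.
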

\begin{proof}  Since $\S \in F(m, n, p)$, $\S^{(k)}$ is diagonal for $k = 1, 2, \ldots, p$.  By (\ref{e3.3-1}),
$$\Delta^{(k)} \equiv \Delta^{(k)}(\S) = \sum_{l=1}^p \omega^{(l-1)(k-1)} \S^{(l)}.$$
Then, $\Delta^{(k)}$ is diagonal for $k = 1, 2, \ldots, p$.    We now show that $\Delta^{(k)}$ is real for $k = 1, 2, \ldots, p$.   For $i = 1, 2,  \ldots, \min \{ m, n \}$ and $k = 1, 2, \ldots, p$, by (\ref{e2.2}), we have
\begin{eqnarray*}
\Delta_{ii}^{(k)} & = & \sum_{l=1}^p  \omega^{(l-1)(k-1)}\S(i, i, l)\\
& = & \S(i, i, 1) + \sum_{l=2}^p  \omega^{(l-1)(k-1)}\S(i, i, l),
\end{eqnarray*}
where $\S(i, i, 1)$ is real.
By the third mode symmetry property (\ref{e4.2a}),
$$\omega^{(p-l+2-1)(k-1)}\S(i,i, p-l+2) = \omega^{(1-l)(k-1)}\S(i, i, l)$$
is conjugate with $\omega^{(l-1)(k-1)}\S(i, i, l)$.  Thus,
$$\omega^{(l-1)(k-1)}\S(i, i, l) + \omega^{(p-l+2-1)(k-1)}\S(i,i, p-l+2) \in \Re.$$
If $p$ is odd, we have
$$\Delta_{ii}^{(k)} = \S(i, i, 1) + \sum_{l=2}^{p+1 \over 2} \left[\omega^{(l-1)(k-1)}\S(i, i, l) + \omega^{(p-l+2-1)(k-1)}\S(i,i, p-l+2) \right] \in \Re.$$
If $p$ is even, note that $\omega^{p \over 2} = e^{{2\pi \over p}\cdot {p \over 2}\sqrt{-1}}= e^{\pi\sqrt{-1}} = -1$.  Thus,
	\begin{equation*}
\begin{split}
			\Delta_{ii}^{(k)} &=  \S(i, i, 1) +(-1)^{k-1}\S\left(i, i, {p\over 2}+1\right) +\\ &\sum_{l=2}^{p \over 2} \left[\omega^{(l-1)(k-1)}\S(i, i, l) + \omega^{(p-l+2-1)(k-1)}\S(i,i, p-l+2) \right] \in \Re.
	\end{split}
	\end{equation*}
Therefore, $\Delta_{ii}^{(k)}$ is real.  Hence $\Delta = \Delta(\S)$ is real and diagonal.

Now, if (\ref{e5.9}) and (\ref{e5.10}) hold, then for $k = 1, 2, \ldots, p$, we have
$$\Delta_{11}^{(k)} \ge \Delta_{22}^{(k)} \ge \ldots \ge \Delta_{\min \{ m,n \}, \min \{ m,n \}}^{(k)} \ge 0.$$
Since $\Delta^{(k)}$ is diagonal, this implies that $\Delta^{(k)} = D^{(k)}$ for $k = 1, 2, \ldots, p$, i.e., $G(\S) = \S$.  Thus, $\S \in S(m, n, p)$.

On the other hand, if $\S \in S(m, n, p)$, then $G(\S) = \S$.    Thus, for $k = 1, 2, \ldots, p$,
$$\Delta^{(k)} = \sum_{l=1}^p \omega^{(l-1)(k-1)} S^{(l)} = D^{(k)}.$$
Hence, $\Delta^{(k)}$ is real nonnegative diagonal matrix, whose diagonal entries follow a nonincreasing order.  This implies (\ref{e5.9}) and (\ref{e5.10}).
\end{proof}

By this theorem, we may conclude that $S(m, n, p)$ is a closed convex cone.  The question is what is its dual cone?

The conditions (\ref{e5.9}) and (\ref{e5.10}) involve the complex number $\omega = {2\pi\sqrt{-1} \over p}$.   Their meanings are not so direct comparing with conditions (\ref{e4.1}) and (\ref{e4.2}-\ref{e4.3}).  Thus, we still wish to find direct sufficient and necessary conditions for s-diagonal tensors.

\section{Direct Sufficient and Necessary Conditions In the Cases that $p=2, 3, 4$}

In this section, we present direct sufficient and necessary conditions for s-diagonal tensors in the cases that $p = 2, 3, 4$.

\subsection{The Case that $p=2$}

When $p=2$, the third mode symmetry property (\ref{e4.2a}) does not exist, and the
tubal leading entry maximum property (\ref{e4.3}) is still needed.  For $p=2$, the
tubal leading entry maximum property (\ref{e4.3}) has the form
\begin{equation} \label{e6.11}
\S(i, i, 1) \ge |\S(i, i, 2)|,
\end{equation}
for $i = 1, 2, \ldots, \min \{ m, n \}$.  Then we may replace the tubal 2-norm decay property (\ref{e4.1}) and the first frontal slice decay property (\ref{e4.2}) by a strong condition

\medskip

{\bf (5) Strong First Frontal Slice Decay Property}, namely
\begin{equation} \label{e6.12}
\S(i, i, 1) - \S(i+1, i+1, 1) \ge |\S(i, i, 2) - \S(i+1,i+1,2)|,
\end{equation}
for $i = 1, 2, \ldots, \min \{ m, n \}-1$.
We call this property the strong first frontal slice decay property, as it is stronger than the first frontal slice decay property (\ref{e4.2}) under the tubal leading entry maximum property (\ref{e6.11}).
Actually, for $p=2$, (\ref{e6.11}) and (\ref{e6.12}) imply (\ref{e4.1}) and (\ref{e4.2}).

\medskip

We have the following theorem.

\begin{theorem} \label{t7.1}
Let $\S \in F(m, n, 2)$.   Then $\S \in S(m, n, 2)$ if and only if the tubal leading entry maximum property (\ref{e6.11}) and the strong first frontal slice decay property (\ref{e6.12}) hold.
\end{theorem}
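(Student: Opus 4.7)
The plan is to derive Theorem 7.1 as a direct specialization of Theorem 6.1 to $p=2$. First I would observe that when $p=2$, the index range in the third mode symmetry property (4.2a) is $k = 2, \ldots, 1 + \lfloor (p-1)/2 \rfloor = 1$, which is empty; hence (4.2a) holds vacuously for every $\S \in F(m,n,2)$, and the hypothesis of Theorem 6.1 is automatically satisfied. This means Theorem 6.1 gives an if-and-only-if characterization of $S(m,n,2)$ purely through conditions (5.9) and (5.10).

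Next I would compute $\omega = e^{2\pi\sqrt{-1}/2} = -1$ and substitute into (5.9) and (5.10). For a fixed $i$, (5.9) at $k=1$ reads $\S(i,i,1) + \S(i,i,2) \ge 0$, and at $k=2$ reads $\S(i,i,1) - \S(i,i,2) \ge 0$. Together these two inequalities are equivalent to the single inequality $\S(i,i,1) \ge |\S(i,i,2)|$, which is precisely (6.11). Likewise (5.10) at $k=1,2$ yields
$$[\S(i,i,1)-\S(i+1,i+1,1)] \pm [\S(i,i,2)-\S(i+1,i+1,2)] \ge 0,$$
which together are equivalent to $\S(i,i,1) - \S(i+1,i+1,1) \ge |\S(i,i,2) - \S(i+1,i+1,2)|$, i.e. (6.12).

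Combining these equivalences with Theorem 6.1, $\S \in S(m,n,2)$ iff (6.11) and (6.12) both hold, which is the claim. There is no real obstacle here; the argument is essentially bookkeeping. The only point that merits a sentence of care is the reduction of the pair of signed inequalities coming from $k=1$ and $k=2$ to a single inequality involving an absolute value, which relies on the elementary fact that $a \ge |b|$ is equivalent to $a+b \ge 0$ and $a-b \ge 0$. I would also briefly note in passing (to motivate the remark preceding the theorem) that (6.11) and (6.12) imply both the tubal 2-norm decay property (4.1) and the first frontal slice decay property (4.2) in the $p=2$ setting, so the strong form indeed subsumes the weaker necessary conditions that were identified in Section 5.
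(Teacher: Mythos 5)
Your proposal is correct and follows essentially the same route as the paper's own proof: specialize Theorem \ref{t6.1} to $p=2$ with $\omega=-1$, compute $\Delta_{ii}^{(1)}=\S(i,i,1)+\S(i,i,2)$ and $\Delta_{ii}^{(2)}=\S(i,i,1)-\S(i,i,2)$, and identify the pairs of signed inequalities from (\ref{e5.9}) and (\ref{e5.10}) with the absolute-value conditions (\ref{e6.11}) and (\ref{e6.12}). Your explicit remark that the third mode symmetry hypothesis is vacuous when $p=2$ is a welcome clarification of a point the paper only states informally.
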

\begin{proof}  For $p=2$, $\omega = e^{2\pi\sqrt{-1} \over 2} = -1$.  Since $p=2$,
$$\Delta_{ii}^{(1)} \equiv \Delta_{ii}^{(1)}(\S) = \sum_{l=1}^2 \omega^0 \S(i, i, l) = \S(i, i, 1) + \S(i, i, 2) \in \Re,$$
$$\Delta_{ii}^{(2)} \equiv \Delta_{ii}^{(2)}(\S) = \sum_{l=1}^2 \omega^{l-1} \S(i, i, l) = \S(i, i, 1) - \S(i, i, 2) \in \Re.$$
The equality (\ref{e6.11}) is equivalent to
$$\S(i, i, 1) \ge \max \{ \S(i, i, 2), -\S(i, i, 2) \},$$
i.e.,
$$\S(i,i,1) + \S(i,i,2) \ge 0,\ {\rm and}\ \ \S(i,i,1) - \S(i,i,2)\ge 0,$$
for $i = 1, 2, \ldots, \min \{ m, n \}$.    This is (\ref{e5.9}) for $p=2$.

The equality (\ref{e6.12})  is equivalent to
$$\S(i,i,1)-\S(i+1,i+1,1) \ge \max \{ \S(i,i,2)-\S(i+1,i+1,2), \S(i+1,i+1,2)-\S(i,i,2)\},$$
i.e.,
$$\left[\S(i,i,1)-\S(i+1,i+1,1)\right]+\left[\S(i,i,2)-\S(i+1,i+1,2)\right]\ge 0$$
and
$$\left[\S(i,i,1)-\S(i+1,i+1,1)\right]-\left[\S(i,i,2)-\S(i+1,i+1,2)\right]\ge 0,$$
for $i = 1, 2, \ldots, \min \{ m, n \}-1$.  This is (\ref{e5.10}) for $p=2$.

The conclusion follows from Theorem \ref{t6.1}.
\end{proof}

Theorem \ref{t7.1} indicates that an s-diagonal tensor may not be nonnegative.

\subsection{The Case that $p=3$}

When $p=3$, the third mode symmetry property (\ref{e4.2a}) has the form
\begin{equation} \label{e7.12}
\S(i,i,2) = \S(i,i,3),
\end{equation}
for $i = 1, 2, \ldots, \min \{ m, n \}$.   Then we need two more conditions.

\medskip

{\bf (6) Strong Tubal Leading Entry Maximum Property}, namely
\begin{equation} \label{e7.13}
\S(i, i, 1) \ge \max \{-2\S(i,i,2), \S(i,i,2)\}
\end{equation}
for $i = 1, 2,  \ldots, \min \{ m, n \}$.  For $p=2$, this condition is stronger than  the tubal leading entry maximum property (\ref{e4.3}).  Thus, we call it the strong tubal leading entry maximum property.

\medskip

\medskip

{\bf (7) Strong First Frontal Slice Decay Property for $p=3$}, namely
\begin{eqnarray}
&& \S(i, i, 1) - \S(i+1, i+1, 1)\nonumber \\
& \ge & \max \left\{ -2[\S(i, i, 2) - \S(i+1,i+1,2)], \S(i, i, 2) - \S(i+1,i+1,2) \right\},\label{e7.14}
\end{eqnarray}
for $i = 1, 2, \ldots, \min \{ m, n \}-1$.
We call this property the strong first frontal slice decay property for $p=3$, as it is stronger than the first frontal slice decay property (\ref{e4.2}) under the strong tubal leading entry maximum property (\ref{e7.13}).

\medskip

We have the following theorem.

\begin{theorem}
Let $\S \in F(m, n, 3)$.   Then $\S \in S(m, n, 3)$ if and only if  the third mode symmetry property (\ref{e7.12}), the strong tubal leading entry maximum property (\ref{e7.13}) and the strong first frontal slice decay property for $p=3$ (\ref{e7.14}) hold.
\end{theorem}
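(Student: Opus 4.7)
The plan is to reduce this theorem to Theorem \ref{t6.1} by a direct computation analogous to what was done in Theorem \ref{t7.1} for $p=2$. Specifically, once the third mode symmetry property (\ref{e7.12}) is in force, Theorem \ref{t6.1} tells us that $\S \in S(m,n,3)$ if and only if the two families of inequalities (\ref{e5.9}) and (\ref{e5.10}) hold for $p=3$. The task is then to rewrite those inequalities in real form, using $\omega = e^{2\pi\sqrt{-1}/3}$ and the identity $1 + \omega + \omega^2 = 0$ (equivalently $\omega + \omega^2 = -1$), and recognize that what falls out is precisely (\ref{e7.13}) and (\ref{e7.14}).

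Concretely, I will first assume (\ref{e7.12}) and compute $\Delta_{ii}^{(k)}(\S)$ for $k=1,2,3$ directly. Using $\S(i,i,2)=\S(i,i,3)$ one gets
\begin{equation*}
\Delta_{ii}^{(k)} = \S(i,i,1) + \bigl(\omega^{k-1}+\omega^{2(k-1)}\bigr)\S(i,i,2),
\end{equation*}
which yields $\Delta_{ii}^{(1)} = \S(i,i,1)+2\S(i,i,2)$ and $\Delta_{ii}^{(2)}=\Delta_{ii}^{(3)} = \S(i,i,1)-\S(i,i,2)$. In particular these are real, confirming the hypothesis of Theorem \ref{t6.1}. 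Then condition (\ref{e5.9}) for $k=1,2,3$ collapses into the two scalar inequalities $\S(i,i,1)+2\S(i,i,2)\ge 0$ and $\S(i,i,1)-\S(i,i,2)\ge 0$, which together are exactly (\ref{e7.13}). An identical calculation applied to the differences $\S(i,i,l)-\S(i+1,i+1,l)$ shows that (\ref{e5.10}) for $p=3$ is equivalent to (\ref{e7.14}).

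For the converse direction I will argue in the same style as the proof of Theorem \ref{t7.1}: if $\S \in S(m,n,3)$, then by Theorem \ref{t4.1} the third mode symmetry property holds, so (\ref{e7.12}) is automatic and the computation of $\Delta_{ii}^{(k)}$ above is valid; Theorem \ref{t6.1} then forces (\ref{e5.9}) and (\ref{e5.10}), which as shown above are (\ref{e7.13}) and (\ref{e7.14}). For the forward direction, the three listed conditions are now seen to be precisely the real reformulations of the hypotheses of Theorem \ref{t6.1}, so the conclusion $\S \in S(m,n,3)$ follows.

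I do not expect any serious obstacle: the only nontrivial ingredient is the elementary identity $\omega + \omega^2 = -1$ for a primitive cube root of unity, and everything else is substitution. The one subtlety worth stating explicitly is why it is legitimate to replace the complex inequality (\ref{e5.9}) with the pair of real inequalities $\S(i,i,1)\ge -2\S(i,i,2)$ and $\S(i,i,1)\ge \S(i,i,2)$: this is justified precisely because (\ref{e7.12}) guarantees $\Delta_{ii}^{(k)}\in\Re$, so the nonnegativity condition in (\ref{e5.9}) is a genuine inequality between real numbers. An entirely parallel remark handles (\ref{e5.10}) versus (\ref{e7.14}).
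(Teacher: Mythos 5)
Your proposal is correct and follows essentially the same route as the paper: under the third mode symmetry property one computes $\Delta_{ii}^{(1)}=\S(i,i,1)+2\S(i,i,2)$ and $\Delta_{ii}^{(2)}=\Delta_{ii}^{(3)}=\S(i,i,1)-\S(i,i,2)$ via $\omega+\omega^2=-1$, and then identifies (\ref{e7.13}) and (\ref{e7.14}) with (\ref{e5.9}) and (\ref{e5.10}) so that Theorem \ref{t6.1} applies. Your explicit appeal to Theorem \ref{t4.1} to recover (\ref{e7.12}) in the converse direction is a slightly more careful articulation of a step the paper leaves implicit, but the argument is the same.
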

\begin{proof} For $p=3$, we have $\omega = -{1 \over 2}+ {\sqrt{-3}\over 2}$ and $\omega^2 = -{1 \over 2}- {\sqrt{-3}\over 2}$.  Then for $i = 1, 2,  \ldots, \min \{ m, n \}$,
$$\Delta_{ii}^{(1)} \equiv \Delta_{ii}^{(1)}(\S) = \sum_{l=1}^3 \S(i,i,l) = \S(i,i,1) + 2\S(i,i,2),$$
as by (\ref{e7.12}), $\S(i,i,2) = \S(i,i,3)$,
\begin{eqnarray*}
\Delta_{ii}^{(2)} \equiv \Delta_{ii}^{(2)}(\S) & = & \sum_{l=1}^3 \omega^{l-1}\S(i,i,l)\\
& = & \S(i,i,1)+ \omega\S(i,i,2) + \omega^2\S(i,i,3)\\
& = & \S(i,i,1)-\S(i,i,2),
\end{eqnarray*}
\begin{eqnarray*}
\Delta_{ii}^{(3)} \equiv \Delta_{ii}^{(3)}(\S) & = & \sum_{l=1}^3 \omega^{2(l-1)}\S(i,i,l)\\
& = & \S(i,i,1)+ \omega^2\S(i,i,2) + \omega\S(i,i,3)\\
& = & \S(i,i,1)-\S(i,i,2).
\end{eqnarray*}
Under the condition (\ref{e7.12}), the inequalities (\ref{e7.13}) and (\ref{e7.14}) are (\ref{e5.9}) and (\ref{e5.10}), respectively, for $p=3$.   Together with the third mode symmetry property (\ref{e7.12}), we have the conclusion by Theorem \ref{t6.1}.
\end{proof}

\subsection{The Case that $p=4$}

When $p=4$, the third mode symmetry property (\ref{e4.2a}) has the form
\begin{equation} \label{e7.15}
\S(i,i,2) = \S(i,i,4),
\end{equation}
for $i = 1, 2, \ldots, \min \{ m, n \}$.   Again, we need two more conditions:
\begin{equation} \label{e7.16}
\S(i,i,1) + \S(i,i,3) \ge \max \{ 2|\S(i,i,2)|, 2\S(i,i,3) \},
\end{equation}
for $i = 1, 2, \ldots, \min \{ m, n \}$,
and
$$[\S(i,i,1)- \S(i+1,i+1,1)] + [\S(i,i,3)-\S(i+1,i+1,3)]$$
\begin{equation} \label{e7.17}
\ge \max \{ 2|\S(i,i,2)-\S(i+1,i+1,2)|, 2[\S(i,i,3)-\S(i+1,i+1,3)]\},
\end{equation}
for $i = 1, 2, \ldots, \min \{ m, n \}-1$.  We will see that under (\ref{e7.15}), (\ref{e7.16}) and (\ref{e7.17}) are (\ref{e5.9}) and (\ref{e5.10}), respectively, for $p=4$.

\medskip

We have the following theorem.

\begin{theorem}
Let $\S \in F(m, n, 4)$.   Then $\S \in S(m, n, 4)$ if and only if (\ref{e7.15}), (\ref{e7.16}) and (\ref{e7.17}) hold.
\end{theorem}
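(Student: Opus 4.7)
The plan is to mimic exactly the proof strategy used for the $p=2$ and $p=3$ cases, namely reduce the statement to Theorem \ref{t6.1}. Since (\ref{e7.15}) is precisely the third mode symmetry property (\ref{e4.2a}) for $p=4$, it suffices to show that under (\ref{e7.15}) the inequalities (\ref{e7.16}) and (\ref{e7.17}) are exactly the conditions (\ref{e5.9}) and (\ref{e5.10}) for $p=4$. Once this reformulation is in place, Theorem \ref{t6.1} closes the argument in both directions.

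The first step is to specialize $\omega=e^{2\pi\sqrt{-1}/4}=\sqrt{-1}$ and to compute $\Delta_{ii}^{(k)}(\S)$ for $k=1,2,3,4$ using (\ref{e3.3-1}). Since $\omega^0=1$, $\omega^1=\sqrt{-1}$, $\omega^2=-1$, $\omega^3=-\sqrt{-1}$, and the symmetry relation $\S(i,i,2)=\S(i,i,4)$ from (\ref{e7.15}) is imposed, the imaginary parts cancel in $\Delta_{ii}^{(2)}$ and $\Delta_{ii}^{(4)}$, yielding
\begin{align*}
\Delta_{ii}^{(1)} &= \S(i,i,1) + 2\S(i,i,2) + \S(i,i,3),\\
\Delta_{ii}^{(2)} &= \Delta_{ii}^{(4)} = \S(i,i,1) - \S(i,i,3),\\
\Delta_{ii}^{(3)} &= \S(i,i,1) - 2\S(i,i,2) + \S(i,i,3).
\end{align*}
So each $\Delta_{ii}^{(k)}$ is real, consistent with Theorem \ref{t6.1}.

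The second step is to read off (\ref{e5.9}) for $p=4$. The requirement $\Delta_{ii}^{(k)}\ge 0$ for $k=1,3$ is the pair of inequalities $\S(i,i,1)+\S(i,i,3)\ge \pm 2\S(i,i,2)$, which combine to $\S(i,i,1)+\S(i,i,3)\ge 2|\S(i,i,2)|$. The requirement for $k=2$ (equivalently $k=4$) is $\S(i,i,1)-\S(i,i,3)\ge 0$, i.e.\ $\S(i,i,1)+\S(i,i,3)\ge 2\S(i,i,3)$. Taking the maximum gives exactly (\ref{e7.16}). An identical computation applied to the differences $\S(i,i,l)-\S(i+1,i+1,l)$ yields the equivalence of (\ref{e7.17}) with (\ref{e5.10}) for $p=4$.

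There is no genuine obstacle: the argument is a direct evaluation of the DFT sums at the fourth roots of unity and a clean splitting into the two extremal inequalities hidden inside the max. The only place one must be careful is in handling the boundary convention $\S(\min\{m,n\}+1,\min\{m,n\}+1,l)=0$ from Theorem \ref{t6.1} when $i=\min\{m,n\}$ in (\ref{e7.17}); this is purely bookkeeping, parallel to the $p=2,3$ proofs. With (\ref{e5.9}) and (\ref{e5.10}) verified for $p=4$ and (\ref{e7.15}) supplying the symmetry hypothesis, Theorem \ref{t6.1} delivers $\S\in S(m,n,4)$, and conversely any $\S\in S(m,n,4)$ satisfies (\ref{e7.15})--(\ref{e7.17}) by reversing the same chain of equivalences.
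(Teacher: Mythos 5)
Your proposal is correct and follows essentially the same route as the paper: specialize $\omega=\sqrt{-1}$, compute $\Delta_{ii}^{(1)},\ldots,\Delta_{ii}^{(4)}$ under the symmetry (\ref{e7.15}), identify (\ref{e7.16}) and (\ref{e7.17}) with (\ref{e5.9}) and (\ref{e5.10}), and invoke Theorem \ref{t6.1}. The only difference is that you spell out explicitly how the nonnegativity of $\Delta_{ii}^{(1)}$, $\Delta_{ii}^{(2)}$ and $\Delta_{ii}^{(3)}$ packages into the single max-inequality, a step the paper merely asserts.
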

\begin{proof}  For $p=4$, we have $\omega = \sqrt{-1}$, $\omega^2 = -1$, $\omega^3 = -\sqrt{-1}$ and $\omega^4 = 1$.  Then, for $i = 1, 2,  \ldots, \min \{ m, n \}$, we have
$$\Delta_{ii}^{(1)} \equiv \Delta_{ii}^{(1)}(\S) = \sum_{l=1}^4 \S(i,i,l) = \S(i,i,1) + 2\S(i,i,2)+ \S(i,i,3),$$
\begin{eqnarray*}
\Delta_{ii}^{(2)} \equiv \Delta_{ii}^{(2)}(\S) & = & \sum_{l=1}^4 \omega^{l-1}\S(i,i,l)\\
& = & \S(i,i,1)+ \omega\S(i,i,2) + \omega^2\S(i,i,3) + \omega^3\S(i,i,4)\\
& = & \S(i,i,1)-\S(i,i,3),
\end{eqnarray*}
\begin{eqnarray*}
\Delta_{ii}^{(3)} \equiv \Delta_{ii}^{(3)}(\S) & = & \sum_{l=1}^4 \omega^{2(l-1)}\S(i,i,l)\\
& = & \S(i,i,1)+ \omega^2\S(i,i,2) + \omega^4\S(i,i,3) + \omega^6\S(i,i,4)\\
& = & \S(i,i,1)-2\S(i,i,2)+\S(i,i,3),
\end{eqnarray*}
and
\begin{eqnarray*}
\Delta_{ii}^{(4)} \equiv \Delta_{ii}^{(4)}(\S) & = & \sum_{l=1}^4 \omega^{3(l-1)}\S(i,i,l)\\
& = & \S(i,i,1)+ \omega^3\S(i,i,2) + \omega^6\S(i,i,3) + \omega^9\S(i,i,4)\\
& = & \S(i,i,1)-\S(i,i,3).
\end{eqnarray*}
Under (\ref{e7.15}), (\ref{e7.16}) and (\ref{e7.17}) are (\ref{e5.9}) and (\ref{e5.10}), respectively, for $p=4$.  Then, combining with (\ref{e7.15}), we have the conclusion by Theorem  \ref{t6.1}.
\end{proof}

\medskip
We see that direct sufficient and necessary conditions for s-diagonal tensors become more and more complicated as $p$ increases.

\section{Final Remarks}

In this paper, we reveal that the f-diagonal tensor resulted from the Kilmer-Martin mapping is a special f-diagonal tensor.   We call such an f-diagonal tensor an s-diagonal tensor, present a checkable condition, four necessary conditions and some sufficient and necessary conditions for an f-diagonal tensor to be an s-diagonal tensor or not.   The checkable condition involves the Kilmer-Martin mapping $G$.
Theorem \ref{t4.4} indicates that $G^{-1}(\S)$ for $\S \in S(m, n, p)$ partition $T(m, n, p)$ to equivalence classes.
An $m \times n \times p$ f-diagonal tensor $\S$ is an s-diagonal tensor if and only if $G(\S) = \S$.  The four necessary conditions, and the sufficient and necessary conditions for $p=2, 3, 4$, only involve some algebraic equality and inequalities of the entries of $\S$.  The general sufficient and necessary conditions involve a complex number $\omega = {2\pi\sqrt{-1} \over p}$.  These results enrich our understanding to T-SVD factorization and low tensor tubal rank approximation, and will be useful for their further applications.

There are two main further exploration directions on this research topic.

(1) Find sufficient and necessary conditions without involving $\omega$ for $p \ge 5$.

(2) For a given s-diagonal tensor $\S$, find properties related with $\A \in G^{-1}(\S)$.    

Martin, Shafer and LaRue \cite{MSL13} extended T-SVD factorization to higher order tensors.  The discussion in this paper may also be considered to extended to higher order tensors.

\vskip2mm



\begin{thebibliography}{}



\bibitem{CXZ20} Y. Chen, X. Xiao and Y. Zhou, ``Multi-view subspace clustering via simultaneously learning the representation tensor and affinity matrix'', {\sl Pattern Recognition \bf 106} (2020) 107441.

\bibitem{GV13} G.H. Golub and C.F. Van Loan, {\sl Matrix Computation}, 4th ed., The Johns Hopkins University Press, Baltimore, USA, 2013.

\bibitem{KBHH13} M. Kilmer, K. Braman, N. Hao and R. Hoover, ``Third-order tensors as operators on matrices: A theoretical and computational framework with applications in imaging'', {\sl SIAM Journal on Matrix Analysis and Applications \bf 34} (2013) 148-172.

\bibitem{KM11} M. Kilmer and C.D. Martin, ``Factorization strategies for third-order tensors'', {\sl Linear Algebra and Its Applications \bf 435} (2011) 641-658.

\bibitem{KMP08} M. Kilmer, C.D. Martin and L. Perrone, ``A third-order generalization of the matrix svd as a product of third-order tensors'', {\sl Tech. Report \bf TR-2008-4} Tufts University, Computer Science Department, 2008.

\bibitem{LHPQ21} C. Ling, H. He, C. Pan and L. Qi, ``A t-sketching method for low-rank approximation of third order tensors'', manuscript, 2021.

\bibitem{LYQX20} C. Ling, G. Yu, L. Qi and Y. Xu, ``A parallelizable optimization method for missing internet traffic tensor data'', arXiv:2005.09838, 2020.

\bibitem{Lu18} C. Lu, ``Tensor-tensor product toolbox'', arXiv:1806.07247v2, 2018.

\bibitem{MSL13} C.D. Martin, R. Shafer and B. LaRue, ``An order-$p$ tensor factorization with applications in imaging'',  {\sl SIAM Journal on Scientific Computing \bf 35} (2013) A474-A490.

\bibitem{MQW20} Y. Miao, L. Qi and Y. Wei, ``Generalized tensor function via the tensor singular value decomposition based on the T-product'', {\sl Linear Algebra and Its Applications \bf 590} (2020) 258-303.

\bibitem{MQW21} Y. Miao, L. Qi and Y. Wei, ``T-Jordan canonical form and T-Drazin inverse based on the T-product'', {\sl Communications on Applied Mathematics and Computation \bf 3} (2021) doi.org/10.1007/s42967-019-00055-4.

\bibitem{QLLO21} L. Qi, C. Ling, J. Liu and C. Ouyang, ``An orthogonal equivalence theorem for third order tensors'', to appear in: {\sl Journal of Industrial and Management Optimization}.

\bibitem{QY21} L. Qi and G. Yu, ``T-singular values and T-Sketching for third order tensors'',  arXiv:2013.00976, 2021.

\bibitem{SHKM14} O. Semerci, N. Hao, M.E. Kilmer and E.L. Miller, ``Tensor-based formulation and nuclear norm regularization for multienergy computed tomography'', {\sl IEEE Transactions on Image Processing \bf 23} (2014) 1678-1693.

\bibitem{SNZ21} G. Song, M.K. Ng and X. Zhang, ``Robust tensor completion using transformed tensor singular value decomposition'', {\sl Numerical Linear Algebra with Applications \bf 27} (2020) e2299.

\bibitem{XCGZ21} X. Xiao, Y. Chen, Y.J. Gong and Y. Zhou, ``Low-rank reserving t-linear projection for robust image feature extraction'', {\sl IEEE Transactions on image processing \bf 30}, (2021) 108-120.

\bibitem{XCGZ21a} X. Xiao, Y. Chen, Y.J. Gong and Y. Zhou, ``Prior knowledge regularized multiview self-representation and its applications'', {\sl IEEE Transactions on neural networks and learning systems \bf 32}, (2021) 1325-1338.

\bibitem{YHHH16} L. Yang, Z.H. Huang, S. Hu and J. Han, ``An iterative algorithm for third-order tensor multi-rank minimization'', {\sl Computational Optimization and Applications \bf 63} (2016) 169-202.


\bibitem{ZSKA18} J. Zhang, A.K. Saibaba, M.E. Kilmer and S. Aeron, ``A randomized tensor singular value decomposition based on the t-product'',
    {\sl Numerical Linear Algebra with Applications \bf 25} (2018) e2179.

\bibitem{ZA17} Z. Zhang and S. Aeron, ``Exact tensor completion using t-SVD'', {\sl IEEE Transactions on Signal Processing  \bf 65} (2017) 1511-1526.

\bibitem{ZEAHK14} Z. Zhang, G. Ely, S. Aeron, N. Hao and M. Kilmer, ``Novel methods for multilinear data completion and de-noising based on tensor-SVD'', {\sl Proceedings of the IEEE Conference on Computer Vision and Pattern Recognition, ser.  \bf CVPR '14} (2014) 3842-3849.

\bibitem{ZHW21} M. Zheng, Z. Huang and Y. Wang, ``T-positive semidefiniteness of third-order symmetric tensors and T-semidefinite programming'', {\sl Computational Optimization and Applications \bf 78} (2021) 239-272.

\bibitem{ZLLZ18} P. Zhou, C. Lu, Z. Lin and C. Zhang, ``Tensor factorization for low-rank tensor completion'',  {\sl IEEE Transactions on Image Processing  \bf 27} (2018) 1152-1163.


          \end{thebibliography}
          \end{document}